\newcounter{theorem}
\newtheorem{lemma}{Lemma}
\newenvironment{proof}[1][Proof]{\textbf{#1.} }{\rule{0.5em}{0.5em}}
\title{Tube Volumes via Functional Equations}
\author{Ali Deniz, Şahin Koçak\footnote{Corresponding Author}, Yunus Özdemir, A. Ersin Üreyen}
\begin{document}

\maketitle

\begin{abstract}
We give a very simple approach to the computation of tube volumes of self-similar sprays using a functional equation satisfied by them.
\end{abstract}

\textbf{ Keywords: Tube formulas, self-similar sprays, functional equations,  Mellin transform.}

\section{Introduction}\label{introduction}

Complex dimensions of fractals were introduced in 90's by M. Lapidus. He and his coworkers, especially M. von Frankenhuijen and E. Pearse established in the sequel an elaborate theory of complex dimensions of strings, sprays and various higher dimensional generalizations of them (see \cite{LaFra}, \cite{LPW} and references therein). In this note, we want to give a simple alternative approach for self-similar sprays.

A self-similar spray generated by an open set $G\subseteq \mathbb{R}^n$ is a collection $(G_k)_{k\in \mathbb{N}}$ of pairwise disjoint sets $G_k\subseteq \mathbb{R}^n$ such that $G_k$ is a scaled copy of $G$ by some $\lambda_k>0$, where the sequence $(\lambda_k)_{k\in \mathbb{N}}$, called the associated scaling sequence of the spray, is obtained from a ratio list $\{r_1, r_2, \dots, r_J\} \, (0<r_j<1)$ by building all possible words of multiples of the ratios $r_j$:
\[1, r_1, r_2, \dots, r_J, r_1 r_1, r_1r_2, \dots, r_1r_J, r_2r_1, r_2 r_2, \dots,  r_2r_J, \dots\]

For an example see Figure~\ref{figurespraytube} where the ratio list consists of $\frac{1}{2}$, $\frac{1}{3}$ and $\frac{1}{4}$.

We recall that the unique real number $D$ satisfying the Moran equation \linebreak \mbox{$r_1^D+\cdots+ r_J^D=1$} is called the similarity dimension of the ratio list and we assume throughout that $n-1<D<n$. (We note that $D<n$ is equivalent to the finiteness of the total volume of the spray.)

The goal of tube formulas is to find a closed formula for the volume of the inner $\varepsilon$-tube of $\cup G_k$, where surprisingly the complex dimensions will take the stage. (By the inner $\varepsilon$-tube of an open set it is understood the set of points of this set with distance to the boundary less than $\varepsilon$.)

We will consider below a monophase generator $G$, though the pluriphase generators could be handled along the same lines. The adjective ``monophase'' means that the volume $V_G(\varepsilon)$ of the inner $\varepsilon$-tube is given by a polynomial till the inradius $g$ of $G$:

\begin{equation} \label{ictubevolume}
V_G(\varepsilon)=\left\{
\begin{tabular}{ccl}
$ \underset{i=0}{\overset{n-1}{\sum}}\kappa_i \, \varepsilon^{n-i}$& for &$  \varepsilon< g$ \\
&&\\
$ {\rm Vol}(G)$ & for & $\varepsilon \geq g$
\end{tabular}%
\right.
\end{equation}

\begin{figure}[t]
\centering
\includegraphics[scale=0.72]{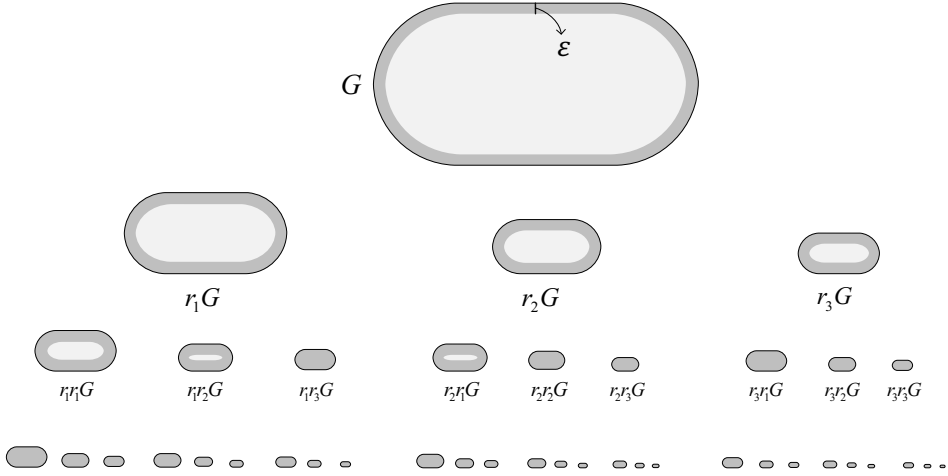}\caption{A spray $(G_k)_{k\in \mathbb{N}}$ and its inner $\varepsilon$-tube}
\label{figurespraytube}
\end{figure}

\section{A Functional Equation}\label{functionalequation}

It is almost obvious that the volume $V_{\cup G_k}(\varepsilon)$ of the inner $\varepsilon$-tube of $\cup G_k$ satisfies the following functional equation:
\begin{equation} \label{ictubevolumespray}
V_{\cup G_k} (\varepsilon)=
  r_1^n  \, V_{\cup G_k} \left( \frac{\varepsilon}{r_1} \right)  +  \cdots +   r_J^n  \, V_{\cup G_k} \left( \frac{\varepsilon}{r_J} \right) +V_{G}(\varepsilon)   \  \ \mbox{ for }  \varepsilon >0.
\end{equation}

\noindent For $\varepsilon \geq g$, $V_{\cup G_k} (\varepsilon)$ can instantly be computed to yield
\[\frac{{\rm Vol}(G)}{1-(r_1^n+r_2^n+\cdots+r_J^n)}.\]

The main idea of the present note is to use this functional equation to obtain the Mellin transform of $V_{\cup G_k}(\varepsilon)$ from which one can get back the tube volume as a sum of residues of an appropriate meromorphic function.

We recall that the Mellin transform of a function $f:(0,\infty) \rightarrow \mathbb{R}$ is given by
\[\widetilde f (s)=\int_0^{\infty} f(x) \, x^{s-1} dx.\]

If this integral exists for some $c\in \mathbb{R}$ and if the function $f$ is continuous at $x_0 \in (0,\infty)$ and of bounded variation in a neighborhood of $x_0$, then $f(x_0)$ can be recovered by the inverse Mellin transform (\cite{Titch})
\[\frac{1}{2\pi \, \mathbbm{i}} \lim_{T \to \infty} \int_{c-\mathbbm{i} T}^{c+\mathbbm{i} T} \widetilde f(s) \, x^{-s} ds.\]

In our case $V_{\cup G_k}$ is continuous and $O(1)$ as $\varepsilon \to \infty$. To apply the Mellin transform we need an estimate as $\varepsilon \to 0$.
\begin{lemma}\label{renewalden}
  $V_{\cup G_k}(\varepsilon)=O\left( \varepsilon^{n-D}  \right)$ as $\varepsilon \to 0$.
\end{lemma}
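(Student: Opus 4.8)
The plan is to exploit the functional equation~\eqref{ictubevolumespray} directly, treating it as a \emph{renewal-type} recursion for the unknown growth rate of $V_{\cup G_k}$ as $\varepsilon\to 0$. First I would record two structural facts that make the estimate plausible. On the one hand, the generator term $V_G(\varepsilon)$ is, by the monophase hypothesis~\eqref{ictubevolume}, a polynomial $\sum_{i=0}^{n-1}\kappa_i\,\varepsilon^{n-i}$ for small $\varepsilon$, so its leading behaviour is $O(\varepsilon)$ (the $i=n-1$ term), which is much tamer than the target $\varepsilon^{n-D}$ since $n-D<1$. On the other hand, the similarity dimension $D$ is exactly the exponent for which the scaling factors balance: by the Moran equation $\sum_j r_j^D=1$, so the homogeneous part of the recursion is \emph{neutral} precisely at the power $\varepsilon^{n-D}$. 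This strongly suggests that $\varepsilon^{n-D}$ is the natural envelope and that the inhomogeneity $V_G$ only contributes lower-order corrections.

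The core argument I would run is an induction / bootstrapping on dyadic-type scales generated by the ratio list. Concretely, set $W(\varepsilon)=V_{\cup G_k}(\varepsilon)/\varepsilon^{n-D}$ and rewrite~\eqref{ictubevolumespray} as
\[
W(\varepsilon)=\sum_{j=1}^{J} r_j^{n}\Bigl(\tfrac{\varepsilon}{r_j}\Bigr)^{n-D}\varepsilon^{-(n-D)}\,W\!\Bigl(\tfrac{\varepsilon}{r_j}\Bigr)+\frac{V_G(\varepsilon)}{\varepsilon^{n-D}}
=\sum_{j=1}^{J} r_j^{D}\,W\!\Bigl(\tfrac{\varepsilon}{r_j}\Bigr)+\frac{V_G(\varepsilon)}{\varepsilon^{n-D}},
\]
where the weights $r_j^{D}$ sum to $1$ by Moran. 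The plan is then to show that $W$ is bounded on $(0,\infty)$. Since each $\tfrac{\varepsilon}{r_j}>\varepsilon$, the recursion expresses $W$ at a small scale in terms of its values at \emph{larger} scales, so I would fix a base interval $[\varepsilon_0, \varepsilon_0/r_{\min})$ on which $W$ is trivially bounded (using continuity of $V_{\cup G_k}$ and its known constant value for $\varepsilon\ge g$), and propagate the bound downward. Let $M$ be a bound for $W$ on the already-controlled region and let $C$ bound the forcing term $V_G(\varepsilon)/\varepsilon^{n-D}$ near $0$ (finite because $V_G=O(\varepsilon)$ and $n-D<1$). Feeding these into the recursion gives $|W(\varepsilon)|\le \bigl(\sum_j r_j^{D}\bigr)M + C = M + C$, which does not immediately close; so I would instead iterate the recursion $m$ times and track the accumulated forcing.

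Carrying out the iteration, unfolding the recursion $m$ steps turns $W(\varepsilon)$ into a convex combination (weights $\prod r_{j_\ell}^{D}$, summing to $1$ at each level) of values $W(\varepsilon/\prod r_{j_\ell})$ at scales that have grown past the base region, plus a sum of forcing contributions of the form $\sum (\text{product of }r^{D})\cdot C$ over all partial words. The neutrality $\sum_j r_j^{D}=1$ keeps the total weight of the ``boundary'' term equal to $1$ for every $m$, so that part stays bounded by $M$; the forcing sum is dominated by a convergent geometric-type series precisely because the true exponent of $V_G$ is $n$ (giving an extra factor $r_j^{D}\cdot r_j^{\,n-D}=r_j^{n}$ of genuine decay per step once the $\varepsilon^{n-D}$ normalisation is removed), so it converges to something finite uniformly in $\varepsilon$. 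I expect the main obstacle to be precisely this bookkeeping: making the unfolding rigorous requires controlling the \emph{number} of words of each length and verifying that the scales $\varepsilon/\prod r_{j_\ell}$ eventually all exceed $g$ after boundedly many steps relative to $\log(1/\varepsilon)$, so that the combinatorial sum of forcing terms telescopes against the genuine $r_j^{n}$ decay rather than merely against the neutral $r_j^{D}$ weights. Once that uniform convergence is established, $W$ is bounded on $(0,\infty)$, which is exactly the claim $V_{\cup G_k}(\varepsilon)=O(\varepsilon^{n-D})$ as $\varepsilon\to 0$.
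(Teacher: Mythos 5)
Your reduction is set up correctly, and your route is genuinely different from the paper's: the paper gives no argument at all for this lemma, it simply cites the renewal-theorem approach of Levitin--Vassiliev \cite{LeVas}, so a self-contained proof along your lines would be a real addition. The normalized recursion $W(\varepsilon)=\sum_j r_j^D W(\varepsilon/r_j)+V_G(\varepsilon)/\varepsilon^{n-D}$, the neutrality $\sum_j r_j^D=1$, the boundedness of $W$ on $[g,\infty)$, and the boundedness of the forcing near $0$ (indeed $V_G(\varepsilon)/\varepsilon^{n-D}=O(\varepsilon^{D-n+1})$, which needs $D>n-1$) are all right. One misstatement first: near $0$ the generator satisfies $V_G(\varepsilon)\asymp\varepsilon$ (the $i=n-1$ term of (\ref{ictubevolume})), not $\asymp\varepsilon^{n}$; the ``extra factor $r_w^{n}$'' you invoke is available only for those words $w$ whose rescaled copy has left the polynomial regime, i.e. $\varepsilon/r_w\ge g$.

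The genuine gap is exactly at the step you flag as the ``main obstacle,'' and the bookkeeping you sketch does not close it. Unfolding completely (the boundary term vanishes since $\sum_{|w|=m}r_w^n\to 0$) gives
\begin{equation*}
V_{\cup G_k}(\varepsilon)\;=\;\sum_{w} r_w^{\,n}\,V_G\!\left(\varepsilon/r_w\right),\qquad r_w:=r_{j_1}\cdots r_{j_k},
\end{equation*}
the sum over all finite words. Using only the ``genuine decay'' $\sum_w r_w^n=1/(1-\sum_j r_j^n)<\infty$ together with $V_G\le{\rm Vol}(G)$ yields $V_{\cup G_k}(\varepsilon)=O(1)$ --- the finiteness of the total volume, not the lemma. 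The gain of $\varepsilon^{n-D}$ must come from splitting at the threshold and proving, with $t=\varepsilon/g$, the two estimates $\sum_{r_w\le t}r_w^{n}=O(t^{\,n-D})$ and $\sum_{r_w>t}r_w^{\,n-1}=O(t^{\,n-1-D})$ (the exponent $n-1$ reflecting $V_G(\delta)=O(\delta)$). Your level-by-level geometric comparison cannot deliver these when the $r_j$ are unequal: since $n-1<D$ one has $\sum_j r_j^{\,n-1}>1$, so the level sums $(\sum_j r_j^{\,n-1})^k$ grow geometrically, while different words cross the threshold at levels spread over a window of width proportional to $\log(1/\varepsilon)$; comparing exponents, the truncated level sum overshoots the target by a positive power of $1/t$. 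Concretely, for $n=2$ and ratio list $\{\tfrac12,\tfrac12,\tfrac18\}$ one gets $D\approx1.141$, and the per-level bound gives $(1/t)^{\log(9/8)/\log 2}\approx(1/t)^{0.170}$ against the required $(1/t)^{D-n+1}\approx(1/t)^{0.141}$. The repair is to sum word-wise rather than level-wise, using the Moran equation on prefix-free sets instead of on levels: let $P_t$ be the set of ``first-passage'' words ($r_{w'}\le t$ but all proper prefixes exceed $t$); every branch meets $P_t$ exactly once, so $\sum_{w'\in P_t}r_{w'}^{D}=1$, and more generally the first-passage words below any $w$ carry total weight $r_w^D$. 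Writing $r_w^{n}=r_w^{D}r_w^{\,n-D}$ and decomposing each word with $r_w\le t$ as (first-passage prefix)$\cdot$(tail) gives $\sum_{r_w\le t}r_w^{n}\le t^{\,n-D}/(1-\sum_j r_j^n)$; writing $r_w^{\,n-1}=r_w^{D}r_w^{\,n-1-D}$ and grouping the proper prefixes of each $w'\in P_t$, whose contributions are dominated by a geometric series with ratio $r_{\max}^{\,D-n+1}<1$, gives $\sum_{r_w>t}r_w^{\,n-1}\le t^{\,n-1-D}/(1-r_{\max}^{\,D-n+1})$. Feeding these two bounds into the split sum yields $V_{\cup G_k}(\varepsilon)\le C\varepsilon^{n-D}$ and completes your argument.
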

\begin{proof}
  See, for example, \cite{LeVas}.
\end{proof}

Before applying the Mellin transform to the above functional equation, it will be technically more advantageous to use a normalized version of the tube volume, so we define $\displaystyle f(\varepsilon):=\frac{V_{\cup G_k}(\varepsilon)}{\varepsilon^n}$. This function will simplify the above functional equation rendering it to the following one:
\begin{equation} \label{ictubevolumespraynormed}
f(\varepsilon) = f\left( \frac{\varepsilon}{r_1} \right)  +  \cdots +   f\left( \frac{\varepsilon}{r_J} \right) +\frac{V_{G}(\varepsilon)}{\varepsilon^n}   \ \  \mbox{ for }   \varepsilon >0.
\end{equation}

The function $f$ is continuous, of locally bounded variation (since $V_{\cup G_k}(\varepsilon)$ is locally polynomial for a monophase $G$), $f(\varepsilon)=O\left( \varepsilon^{-n}  \right)$ as $\varepsilon \to \infty$ and by Lemma~\ref{renewalden} $f(\varepsilon)=O\left( \varepsilon^{-D}  \right)$ as $\varepsilon \to 0$. So, the integral
\[\int_0^{\infty} f(\varepsilon) \, \varepsilon^{s-1} d\varepsilon\]
exists for any  $s$ with $D<{\rm Re}(s)<n$.

It can readily be computed that for $n-1<{\rm Re}(s)<n$,
\[\int_0^{\infty} V_G (\varepsilon) \, \varepsilon^{s-n-1} d\varepsilon =\sum \limits_{i=0}^{n} \kappa_i \frac{g^{s-i}}{s-i},\]
where ${\rm Vol}(G)$ is denoted by $-\kappa_n$.

Applying the Mellin transform to both sides of (\ref{ictubevolumespraynormed}) we get for $D<{\rm Re}(s)<n$,
\begin{eqnarray*}
\widetilde f(s)&=& r_1^s \widetilde f(s) + r_2^s \widetilde f(s)+\cdots + r_J^s \widetilde f(s) +  \int_0^{\infty} V_G (\varepsilon) \, \varepsilon^{s-n-1} d\varepsilon \, , \\
\widetilde f(s)&=& \frac{1}{1-\sum \limits_{j=1}^{J} r_j^s} \sum \limits_{i=0}^{n} \kappa_i \frac{g^{s-i}}{s-i}.
\end{eqnarray*}

Taking the inverse Mellin transform, we get  (for $D < c <n$) \[f(\varepsilon)= \frac{1}{2\pi \mathbbm{i}} \lim_{T \to \infty} \int_{c-\mathbbm{i}T}^{c+\mathbbm{i}T} \frac{\sum \limits_{i=0}^{n} \kappa_i \frac{g^{s-i}}{s-i}}{1-\sum \limits_{j=1}^{J} r_j^s} \varepsilon^{-s} ds \] and a well-known procedure using the residue theorem (explained in \cite{LaFra})  yields the tube formula
\[f(\varepsilon)=\sum_{\omega \in \mathfrak{D} \cup \{0,1,\dots,n-1 \}} {\rm res } \left (  \frac{\ \sum \limits_{i=0}^{n} \kappa_i \frac{g^{s-i}}{s-i}\ }{\ 1-\sum \limits_{j=1}^{J} r_j^s} \ \varepsilon^{-s}\, ; \, \omega      \right ) \ \ \ (\mbox{for } \varepsilon<g),\]
where $\mathfrak{D}$  is the set of zeros of the function $1-\sum \limits_{j=1}^{J} r_j^s$ (introduced and called complex dimensions by Lapidus); i.e. \[V_{\cup G_k} (\varepsilon) = \sum_{\omega \in \mathfrak{D} \cup \{0,1,\dots,n-1 \}} {\rm res}  \left ( \varepsilon^{n-s} \ \frac{\ \sum \limits_{i=0}^{n} \kappa_i \frac{g^{s-i}}{s-i}\ }{\ 1-\sum \limits_{j=1}^{J} r_j^s\ }  \,; \, \omega      \right ) \ \ \ (\mbox{for } \varepsilon<g).\]


\begin{thebibliography}{7}

\bibitem{LaFra} M.L. Lapidus and M. van Frankenhuijsen, \emph{Fractal Geometry, Complex Dimensions and Zeta Functions: Geometry and
Spectra of Fractal Strings}. Springer Monographs in Mathematics,
Springer-Verlag, New York, 2006; 2$^{\rm{nd}}$ edn., 2012.

%\bibitem{lape1} M. L. Lapidus, E. P. J. Pearse, Tube formulas and
%complex dimensions of self-similar tilings,  \emph{Acta Appl. Math.} \textbf{112}(1) (2010), 91--136.

\bibitem{LPW} M. L. Lapidus, E. P. J. Pearse, S. Winter, Pointwise tube formulas for fractal sprays and self-similar tilings with arbitrary generators, \emph{Advances in Mathematics}, \textbf{227} (2011), 1349--1398.

\bibitem{LeVas} M. Levitin and D. Vassiliev, Spectral asymptotics, renewal theorem, and the Berry conjecture for a class of fractals, \emph{Proc. London Math. Soc.} \textbf{72} (3) (1996), 178--214.

\bibitem{Titch}{E.C. Titchmarsh}, \emph{Introduction to the theory of Fourier integrals}, Oxford at the Clerandon Press, 1948.
\end{thebibliography}
\end{document}